\newtheorem{theorem}{Theorem}[section]
\newtheorem{lemma}[theorem]{Lemma}
\newtheorem{corollary}[theorem]{Corollary}
\theoremstyle{definition}
\newtheorem{definition}[theorem]{Definition}
\newtheorem{example}[theorem]{Example}
\newtheorem{remark}[theorem]{Remark}
\numberwithin{equation}{section}
\newcommand{\be}{\begin{equation}}
\newcommand{\ee}{\end{equation}}
\numberwithin{equation}{section}
\patchcmd{\@settitle}{\uppercasenonmath\@title}{}{}{}
\patchcmd{\@setauthors}{\MakeUppercase}{}{}{}
\begin{document}

\title[Continuous $\ast$-K-g-Frame in Hilbert $C^{\ast}$-Modules]{Continuous $\ast$-K-g-Frame in Hilbert $C^{\ast}$-Modules}

\author[A. TOURI$^{*}$, M. ROSSAFI, H. LABRIGUI, A. AKHLIDJ]{A. TOURI$^1$, M. ROSSAFI$^1$, H. LABRIGUI $^1$ \MakeLowercase{and} A. AKHLIDJ$^2$}

\address{$^{1}$Department of Mathematics, University of Ibn Tofail, B.P. 133, Kenitra, Morocco}
\email{\textcolor[rgb]{0.00,0.00,0.84}{rossafimohamed@gmail.com;  hlabrigui75@gmail; touri.abdo68@gmail.com}}
\address{$^2$Department of Mathematics,
	University of Hassan II,
	Casablanca
	Morocco}
\email{\textcolor[rgb]{0.00,0.00,0.84}{akhlidjabdellatif@gmail.com}}


\subjclass[2010]{41A58, 42C15}


\keywords{Continuous Frame, Continuous $\ast$-K-g-frame, $C^{\ast}$-algebra, Hilbert $\mathcal{A}$-modules.\\
\indent $^{*}$ Corresponding author}
\maketitle

\begin{abstract}
	In this paper, we introduce the concept of Continuous $\ast$-K-g-Frame in Hilbert $C^{\ast}$-Modules and we give some properties.
\end{abstract}
\maketitle
\vspace{0.1in}

\section{\textbf{Introduction and preliminaries}}
The concept of frames in Hilbert spaces has been introduced by
Duffin and Schaeffer \cite{Duf} in 1952 to study some deep problems in nonharmonic Fourier
series, after the fundamental paper \cite{13} by Daubechies, Grossman and Meyer, frame
theory began to be widely used, particularly in the more specialized context of wavelet
frames and Gabor frames \cite{Gab}.

Traditionally, frames have been used in signal processing, image processing, data compression
and sampling theory. A discreet frame is a countable family of
elements in a separable Hilbert space which allows for a stable, not necessarily unique,
decomposition of an arbitrary element into an expansion of the frame elements. The
concept of a generalization of frames to a family indexed by some locally compact space
endowed with a Radon measure was proposed by G. Kaiser \cite{15} and independently by Ali,
Antoine and Gazeau \cite{11}. These frames are known as continuous frames. Gabardo and
Han in \cite{14} called these frames associated with measurable spaces, Askari-Hemmat,
Dehghan and Radjabalipour in \cite{12} called them generalized frames and in mathematical
physics they are referred to as coherent states \cite{11}. 

In this paper, we introduce the notion of Continuous $\ast$-K-g-Frame which are generalization of $\ast$-K-g-Frame in Hilbert $C^{\ast}$-Modules introduced by M. Rossafi and S. Kabbaj \cite{Ross} and we establish some new results.

The paper is organized as follows, we continue this introductory section we briefly recall the definitions and basic properties of $C^{\ast}$-algebra, Hilbert $C^{\ast}$-modules. In Section 2, we introduce the Continuous $\ast$-K-g-Frame, the Continuous pre-$\ast$-K-g-frame operator and the Continuous $\ast$-K-g-frame operator, also we establish here properties.

In the following we briefly recall the definitions and basic properties of $C^{\ast}$-algebra, Hilbert $\mathcal{A}$-modules. Our reference for $C^{\ast}$-algebras is \cite{{Dav},{Con}}. For a $C^{\ast}$-algebra $\mathcal{A}$ if $a\in\mathcal{A}$ is positive we write $a\geq 0$ and $\mathcal{A}^{+}$ denotes the set of positive elements of $\mathcal{A}$.
\begin{definition}\cite{Pas}.
	
	Let $ \mathcal{A} $ be a unital $C^{\ast}$-algebra and $\mathcal{H}$ be a left $ \mathcal{A} $-module, such that the linear structures of $\mathcal{A}$ and $ \mathcal{H} $ are compatible. $\mathcal{H}$ is a pre-Hilbert $\mathcal{A}$-module if $\mathcal{H}$ is equipped with an $\mathcal{A}$-valued inner product $\langle.,.\rangle_{\mathcal{A}} :\mathcal{H}\times\mathcal{H}\rightarrow\mathcal{A}$, such that is sesquilinear, positive definite and respects the module action. In the other words,
	\begin{itemize}
		\item [(i)] $ \langle x,x\rangle_{\mathcal{A}}\geq0 $ for all $ x\in\mathcal{H} $ and $ \langle x,x\rangle_{\mathcal{A}}=0$ if and only if $x=0$.
		\item [(ii)] $\langle ax+y,z\rangle_{\mathcal{A}}=a\langle x,y\rangle_{\mathcal{A}}+\langle y,z\rangle_{\mathcal{A}}$ for all $a\in\mathcal{A}$ and $x,y,z\in\mathcal{H}$.
		\item[(iii)] $ \langle x,y\rangle_{\mathcal{A}}=\langle y,x\rangle_{\mathcal{A}}^{\ast} $ for all $x,y\in\mathcal{H}$.
	\end{itemize}	 
	For $x\in\mathcal{H}, $ we define $||x||=||\langle x,x\rangle_{\mathcal{A}}||^{\frac{1}{2}}$. If $\mathcal{H}$ is complete with $||.||$, it is called a Hilbert $\mathcal{A}$-module or a Hilbert $C^{\ast}$-module over $\mathcal{A}$. For every $a$ in $C^{\ast}$-algebra $\mathcal{A}$, we have $|a|=(a^{\ast}a)^{\frac{1}{2}}$ and the $\mathcal{A}$-valued norm on $\mathcal{H}$ is defined by $|x|=\langle x, x\rangle_{\mathcal{A}}^{\frac{1}{2}}$ for $x\in\mathcal{H}$.
	
	Let $\mathcal{H}$ and $\mathcal{K}$ be two Hilbert $\mathcal{A}$-modules, A map $T:\mathcal{H}\rightarrow\mathcal{K}$ is said to be adjointable if there exists a map $T^{\ast}:\mathcal{K}\rightarrow\mathcal{H}$ such that $\langle Tx,y\rangle_{\mathcal{A}}=\langle x,T^{\ast}y\rangle_{\mathcal{A}}$ for all $x\in\mathcal{H}$ and $y\in\mathcal{K}$.
	
We reserve the notation $End_{\mathcal{A}}^{\ast}(\mathcal{H},\mathcal{K})$ for the set of all adjointable operators from $\mathcal{H}$ to $\mathcal{K}$ and $End_{\mathcal{A}}^{\ast}(\mathcal{H},\mathcal{H})$ is abbreviated to $End_{\mathcal{A}}^{\ast}(\mathcal{H})$.
\end{definition}

The following lemmas will be used to prove our mains results
\begin{lemma} \label{1} \cite{Pas}.
	Let $\mathcal{H}$ be Hilbert $\mathcal{A}$-module. If $T\in End_{\mathcal{A}}^{\ast}(\mathcal{H})$, then $$\langle Tx,Tx\rangle\leq\|T\|^{2}\langle x,x\rangle, \forall x\in\mathcal{H}.$$
\end{lemma}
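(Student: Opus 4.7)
The plan is to reduce the stated inequality to a positivity statement about the single operator $\|T\|^{2}\id - T^{\ast}T$ in the $C^{\ast}$-algebra $End_{\mathcal{A}}^{\ast}(\mathcal{H})$, and then transfer that positivity to pointwise positivity in $\mathcal{A}^{+}$.

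First, using the adjointability of $T$, I would rewrite the left-hand side as
\[
\langle Tx, Tx\rangle = \langle T^{\ast}Tx, x\rangle,
\]
so the desired estimate is equivalent to $\langle (\|T\|^{2}\id - T^{\ast}T)x, x\rangle \geq 0$ in $\mathcal{A}^{+}$ for every $x \in \mathcal{H}$.

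Next, I would argue that $S := \|T\|^{2}\id - T^{\ast}T$ is a positive element of $End_{\mathcal{A}}^{\ast}(\mathcal{H})$. The element $T^{\ast}T$ is selfadjoint, and the $C^{\ast}$-identity applied in $End_{\mathcal{A}}^{\ast}(\mathcal{H})$ gives $\|T^{\ast}T\| = \|T\|^{2}$. Since the norm of a selfadjoint element of a $C^{\ast}$-algebra coincides with its spectral radius, it follows that $\sigma(T^{\ast}T) \subseteq [0, \|T\|^{2}]$, and hence $\sigma(S) \subseteq [0, \|T\|^{2}]$, so $S \geq 0$. Positivity of $S$ now yields a factorization $S = R^{\ast}R$ with $R \in End_{\mathcal{A}}^{\ast}(\mathcal{H})$ (one may take $R = S^{1/2}$), and therefore
\[
\langle Sx, x\rangle = \langle R^{\ast}Rx, x\rangle = \langle Rx, Rx\rangle \geq 0,
\]
which is exactly the claimed inequality after rearrangement.

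The main obstacle, and really the only nontrivial input, is the passage between operator positivity in the $C^{\ast}$-algebra $End_{\mathcal{A}}^{\ast}(\mathcal{H})$ and the pointwise positivity of the $\mathcal{A}$-valued form $\langle S\,\cdot\,, \cdot\rangle$. The bridge via the factorization $S = R^{\ast}R$ is what makes the step clean; once one accepts that $End_{\mathcal{A}}^{\ast}(\mathcal{H})$ is a genuine $C^{\ast}$-algebra under the operator norm and that positivity there is detected by such a factorization, the remainder of the argument is a one-line computation.
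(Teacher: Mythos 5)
Your proof is correct. Note first that the paper itself offers no argument for this lemma: it is quoted verbatim from Paschke's paper \cite{Pas} and used as a black box, so there is no internal proof to compare against. Your reduction is the standard one for adjointable operators and every step checks out: $End_{\mathcal{A}}^{\ast}(\mathcal{H})$ is a $C^{\ast}$-algebra under the operator norm, $\|T\|^{2}\id - T^{\ast}T$ is selfadjoint with spectrum in $[0,\infty)$ (norm equals spectral radius plus $\|T^{\ast}T\|=\|T\|^{2}$), the square root $R=S^{1/2}$ produced by the continuous functional calculus stays inside $End_{\mathcal{A}}^{\ast}(\mathcal{H})$, and the identity $\langle R^{\ast}Rx,x\rangle=\langle Rx,Rx\rangle\geq 0$ transfers operator positivity to pointwise positivity in $\mathcal{A}^{+}$, which is exactly the one nontrivial bridge you correctly isolate. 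For context, Paschke's original statement is more general -- it holds for bounded $\mathcal{A}$-module maps that need not be adjointable, and his proof goes through states of $\mathcal{A}$ and the induced semi-inner products $\phi(\langle\cdot,\cdot\rangle)$ on localizations of $\mathcal{H}$; your argument trades that generality for a shorter, purely $C^{\ast}$-algebraic computation, which is entirely adequate here since the lemma as stated assumes $T\in End_{\mathcal{A}}^{\ast}(\mathcal{H})$.
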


\begin{lemma} \label{sb} \cite{Ara}.
	Let $\mathcal{H}$ and $\mathcal{K}$ two Hilbert $\mathcal{A}$-modules and $T\in End^{\ast}(\mathcal{H},\mathcal{K})$. Then the following statements are equivalent:
	\begin{itemize}
		\item [(i)] $T$ is surjective.
		\item [(ii)] $T^{\ast}$ is bounded below with respect to norm, i.e., there is $m>0$ such that $\|T^{\ast}x\|\geq m\|x\|$ for all $x\in\mathcal{K}$.
		\item [(iii)] $T^{\ast}$ is bounded below with respect to the inner product, i.e., there is $m'>0$ such that $\langle T^{\ast}x,T^{\ast}x\rangle\geq m'\langle x,x\rangle$ for all $x\in\mathcal{K}$.
	\end{itemize}
\end{lemma}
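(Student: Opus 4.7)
The plan is to prove the three implications cyclically as $(iii)\Rightarrow(ii)\Rightarrow(i)\Rightarrow(iii)$. The step $(iii)\Rightarrow(ii)$ is purely computational: starting from the operator inequality $\langle T^{\ast}x,T^{\ast}x\rangle \geq m'\langle x,x\rangle$ in $\mathcal{A}^{+}$, I would take $C^{\ast}$-norms on both sides and use the defining identity $\|y\|^{2}=\|\langle y,y\rangle\|$ to obtain $\|T^{\ast}x\|\geq \sqrt{m'}\,\|x\|$, which is (ii) with $m=\sqrt{m'}$.

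For $(ii)\Rightarrow(i)$, I would first observe that a norm lower bound on $T^{\ast}$ forces $T^{\ast}$ to be injective and to have closed range. I would then invoke the standard structure theorem for adjointable maps between Hilbert $C^{\ast}$-modules, which asserts that $\ran T^{\ast}$ is closed if and only if $\ran T$ is closed, and that in this case $\ran T = (\ker T^{\ast})^{\perp}$. Combined with $\ker T^{\ast}=\{0\}$ this yields $\ran T = \mathcal{K}$, so $T$ is surjective.

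The most delicate step is $(i)\Rightarrow(iii)$, because one must produce an order inequality in $\mathcal{A}$, not merely a norm inequality. The strategy is to show that $TT^{\ast}\in End_{\mathcal{A}}^{\ast}(\mathcal{K})$ is positive and invertible. Closedness of $\ran T = \mathcal{K}$ together with the closed-range theorem gives $\ran TT^{\ast}=\ran T = \mathcal{K}$; for injectivity, $TT^{\ast}y=0$ implies $\langle T^{\ast}y,T^{\ast}y\rangle = \langle TT^{\ast}y,y\rangle = 0$, whence $T^{\ast}y=0$, and the identity $\ker T^{\ast}=(\ran T)^{\perp}$ (valid for every adjointable $T$) forces $y=0$. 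Hence $TT^{\ast}$ is positive and invertible, so $TT^{\ast}\geq c\,\id$ with $c=\|(TT^{\ast})^{-1}\|^{-1}>0$. Applying this to $\langle T^{\ast}x,T^{\ast}x\rangle=\langle TT^{\ast}x,x\rangle$ yields (iii) with $m'=c$.

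The principal obstacle is exactly this upgrade from the scalar norm estimate of (ii) to the $\mathcal{A}$-valued order estimate of (iii): a direct computation only produces $\|\langle T^{\ast}x,T^{\ast}x\rangle\|\geq m^{2}\|\langle x,x\rangle\|$, which is strictly weaker than positivity of $\langle T^{\ast}x,T^{\ast}x\rangle - m'\langle x,x\rangle$ in $\mathcal{A}$. Because Hilbert $C^{\ast}$-modules admit no general orthogonal projection theorem, the detour through invertibility of $TT^{\ast}$, made legitimate by the closed-range theorem for adjointable maps, is what makes the implication go through.
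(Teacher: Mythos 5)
The paper does not actually prove this lemma: it is quoted from Aramba\v{s}i\'{c} \cite{Ara} and used as a black box, so there is no internal proof to compare yours against. Assessed on its own, your cyclic argument $(iii)\Rightarrow(ii)\Rightarrow(i)\Rightarrow(iii)$ is correct and is the standard route. The step $(iii)\Rightarrow(ii)$ is fine because $0\leq a\leq b$ in a $C^{\ast}$-algebra implies $\|a\|\leq\|b\|$, giving $m'\|x\|^{2}\leq\|T^{\ast}x\|^{2}$. For $(ii)\Rightarrow(i)$ you correctly reduce to the closed-range theorem for adjointable maps (applied to $T^{\ast}$, whose adjoint is $T$): the norm lower bound makes $T^{\ast}$ injective with closed range, hence $\mathcal{K}=\ran T\oplus\ker T^{\ast}=\ran T$; note this decomposition genuinely requires adjointability, since Hilbert $C^{\ast}$-modules have no general projection theorem, and $T$ is adjointable by hypothesis. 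For $(i)\Rightarrow(iii)$, the fact you establish --- $TT^{\ast}$ invertible with $\|(TT^{\ast})^{-1}\|^{-1}\leq TT^{\ast}$ when $T$ is surjective --- is precisely part (ii) of Lemma \ref{3} of this paper (from \cite{Deh}), so you could have cited it directly; your derivation of it is nonetheless sound, and the final passage from $TT^{\ast}\geq c$ to $\langle T^{\ast}x,T^{\ast}x\rangle=\langle TT^{\ast}x,x\rangle\geq c\langle x,x\rangle$ is legitimate because $S\geq c$ means $S-c$ is positive in $End_{\mathcal{A}}^{\ast}(\mathcal{K})$, hence $\langle(S-c)x,x\rangle\geq 0$. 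Your closing observation identifies the genuine subtlety correctly: the scalar estimate of (ii) does not by itself yield the $\mathcal{A}$-valued order inequality of (iii), and the detour through invertibility of $TT^{\ast}$ is what closes the cycle.
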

\begin{lemma} \label{3} \cite{Deh}.
	Let $\mathcal{H}$ and $\mathcal{K}$ two Hilbert $\mathcal{A}$-modules and $T\in End^{\ast}(\mathcal{H},\mathcal{K})$. Then:
	\begin{itemize}
		\item [(i)] If $T$ is injective and $T$ has closed range, then the adjointable map $T^{\ast}T$ is invertible and $$\|(T^{\ast}T)^{-1}\|^{-1}\leq T^{\ast}T\leq\|T\|^{2}.$$
		\item  [(ii)]	If $T$ is surjective, then the adjointable map $TT^{\ast}$ is invertible and $$\|(TT^{\ast})^{-1}\|^{-1}\leq TT^{\ast}\leq\|T\|^{2}.$$
	\end{itemize}	
\end{lemma}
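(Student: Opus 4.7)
\medskip
\noindent\textbf{Proof proposal.} For part (i), my plan is to convert the topological hypotheses on $T$ into the $\mathcal{A}$-valued operator inequality $T^{\ast}T\geq m\,\id$ for some $m>0$, and then to recover invertibility of $T^{\ast}T$ by a spectral argument. First, because $T$ is injective with closed range, the restriction $T\colon\mathcal{H}\to\ran(T)$ is a continuous bijection onto a Hilbert $\mathcal{A}$-module, so the open mapping theorem forces $T$ to be bounded below in norm: $\|Tx\|^{2}\geq m\|x\|^{2}$ for all $x$. Next I apply Lemma \ref{sb} to the operator $T^{\ast}$, whose adjoint is $T$: the equivalence of its conditions (ii) and (iii) promotes this norm bound to the $\mathcal{A}$-valued inequality $\langle Tx,Tx\rangle\geq m\langle x,x\rangle$, which rewrites as $T^{\ast}T\geq m\,\id$. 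Combined with the upper bound $\langle T^{\ast}Tx,x\rangle=\langle Tx,Tx\rangle\leq\|T\|^{2}\langle x,x\rangle$ from Lemma \ref{1}, this yields $m\,\id\leq T^{\ast}T\leq\|T\|^{2}\,\id$.

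Because $T^{\ast}T$ is self-adjoint and positive in the $C^{\ast}$-algebra $End_{\mathcal{A}}^{\ast}(\mathcal{H})$, the lower bound places its spectrum inside $[m,\|T\|^{2}]$, so $T^{\ast}T$ is invertible with $\|(T^{\ast}T)^{-1}\|\leq m^{-1}$; the sharpest admissible choice $m=\|(T^{\ast}T)^{-1}\|^{-1}$ rewrites the two-sided estimate as the asserted inequality. For part (ii) the argument is symmetric and more direct: surjectivity of $T$ activates Lemma \ref{sb} at once to give $\langle T^{\ast}x,T^{\ast}x\rangle\geq m\langle x,x\rangle$, that is, $TT^{\ast}\geq m\,\id$. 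Combining with Lemma \ref{1} applied to $T^{\ast}$ (and using $\|T^{\ast}\|=\|T\|$) supplies $TT^{\ast}\leq\|T\|^{2}\,\id$, after which the same spectral invertibility argument delivers the quantitative constant $\|(TT^{\ast})^{-1}\|^{-1}$.

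The main technical hurdle I expect lies in part (i), at the step where the norm inequality $\|Tx\|^{2}\geq m\|x\|^{2}$ must be upgraded to the $\mathcal{A}$-valued inequality $\langle Tx,Tx\rangle\geq m\langle x,x\rangle$. In a classical Hilbert space these are trivially interchangeable, but in the Hilbert $C^{\ast}$-module setting the passage from a scalar norm estimate to an order inequality in $\mathcal{A}^{+}$ is genuinely nontrivial; this is precisely the service rendered by Lemma \ref{sb}. Once the operator inequality is in hand, the invertibility conclusion follows from the continuous functional calculus on positive elements of $End_{\mathcal{A}}^{\ast}(\mathcal{H})$, which is routine.
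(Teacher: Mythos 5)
The paper does not actually prove this lemma: it is imported verbatim from \cite{Deh} as a known tool, so there is no in-house argument to compare yours against. Judged on its own, your proof is correct and is essentially the standard one. The two key moves both work: (a) injectivity plus closed range gives a Banach-space bijection $T\colon\mathcal{H}\to\ran(T)$, so the open mapping theorem yields $\|Tx\|\geq c\|x\|$; and (b) applying Lemma \ref{sb} to $T^{\ast}$ (whose adjoint is $T$) correctly upgrades that scalar bound to the order inequality $\langle Tx,Tx\rangle\geq m\langle x,x\rangle$, i.e.\ $T^{\ast}T\geq m\,\id$, which is exactly the nontrivial module-theoretic step you flag. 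From there positivity, the identification of $\min\sigma(T^{\ast}T)$ with $\|(T^{\ast}T)^{-1}\|^{-1}$, and the functional calculus finish part (i), and part (ii) is the direct symmetric application of Lemma \ref{sb}. Two small points worth tightening: Lemma \ref{1} as stated in the paper covers only $T\in End_{\mathcal{A}}^{\ast}(\mathcal{H})$, whereas you need it for $T\in End^{\ast}(\mathcal{H},\mathcal{K})$ (the general version is standard, from \cite{Pas}, but say so); and the constants in conditions (ii) and (iii) of Lemma \ref{sb} need not coincide, so you should not reuse the same $m$ — though this is harmless since the final constant $\|(T^{\ast}T)^{-1}\|^{-1}$ is extracted from the spectrum afterwards, not carried from the earlier estimate.
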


\section{\textbf{Continuous $\ast$-K-g-Frame in Hilbert $C^{\ast}$-Modules}}
Let $X$ be a Banach space, $(\Omega,\mu)$ a measure space, and function $f:\Omega\to X$ a measurable function. Integral of the Banach-valued function $f$ has defined Bochner and others. Most properties of this integral are similar to those of the integral of real-valued functions. Because every $C^{\ast}$-algebra and Hilbert $C^{\ast}$-module is a Banach space thus we can use this integral and its properties.

Let $(\Omega,\mu)$ be a measure space, let $U$ and $V$ be two Hilbert $C^{\ast}$-modules, $\{V_{w}: w\in\Omega\}$  is a sequence of subspaces of V, and $End_{\mathcal{A}}^{\ast}(U,V_{w})$ is the collection of all adjointable $\mathcal{A}$-linear maps from $U$ into $V_{w}$.
We define
\begin{equation*}
	\oplus_{w\in\Omega}V_{w}=\left\{x=\{x_{w}\}: x_{w}\in V_{w}, \left\|\int_{\Omega}|x_{w}|^{2}d\mu(w)\right\|<\infty\right\}.
\end{equation*}
For any $x=\{x_{w}: w\in\Omega\}$ and $y=\{y_{w}: w\in\Omega\}$, if the $\mathcal{A}$-valued inner product is defined by $\langle x,y\rangle=\int_{\Omega}\langle x_{w},y_{w}\rangle d\mu(w)$, the norm is defined by $\|x\|=\|\langle x,x\rangle\|^{\frac{1}{2}}$, the $\oplus_{w\in\Omega}V_{w}$ is a Hilbert $C^{\ast}$-module.
\begin{definition}\textcolor{white}{.}
		
	Let $K\in  End_{\mathcal{A}}^{\ast}(U)$,
	We call $\{\Lambda_{w}\in End_{\mathcal{A}}^{\ast}(U,V_{w}): w\in\Omega\}$ a continuous $\ast$-K-g-frame for Hilbert $C^{\ast}$-module $U$ with respect to $\{V_{w}: w\in\Omega\}$ if:
	\begin{itemize}
		\item for any $x\in U$, the function $\tilde{x}:\Omega\rightarrow V_{w}$ defined by $\tilde{x}(w)=\Lambda_{w}x$ is measurable;
		\item there exist two strictly nonzero elements $A$ and $B$ in $\mathcal{A}$ such that
		\begin{equation} \label{2.1}
			A\langle K^{\ast}x,K^{\ast}x\rangle A^{\ast}\leq\int_{\Omega}\langle\Lambda_{w}x,\Lambda_{w}x\rangle d\mu(w)\leq B\langle x,x\rangle B^{\ast}, \forall x\in U.
		\end{equation}
	\end{itemize}
	The elements $A$ and $B$ are called continuous $\ast$-K-g-frame bounds. 
	
	If $A=B$ we call this continuous $\ast$-K-g-frame a continuous tight $\ast$-K-g-frame, and if $A=B=1_{\mathcal{A}}$ it is called a continuous Parseval $\ast$-K-g-frame. If only the right-hand inequality of \eqref{2.1} is satisfied, we call $\{\Lambda_{w}: w\in\Omega\}$ a 
	continuous $\ast$-K-g-Bessel for $U$ with respect to $\{\Lambda_{w}: w\in\Omega\}$ with Bessel bound $B$.
	
\end{definition}
\begin{example}\textcolor{white}{.}
	Let $l^{\infty}$ be the set of all bounded complex-valued sequences. For any $u=\{u_{j}\}_{j\in\mathbf{N}}, v=\{v_{j}\}_{j\in\mathbf{N}}\in l^{\infty}$, we define$$
	uv=\{u_{j}v_{j}\}_{j\in\mathbf{N}}, u^{\ast}=\{\bar{u_{j}}\}_{j\in\mathbf{N}}, \|u\|=\sup_{j\in\mathbf{N}}|u_{j}|.$$
	Then $\mathcal{A}=\{l^{\infty}, \|.\|\}$ is a $\mathbb{C}^{\ast}$-algebra.
	
	Let $\mathcal{H}=C_{0}$ be the set of all sequences converging to zero. For any $u, v\in\mathcal{H}$ we define$$\langle u,v\rangle=uv^{\ast}=\{u_{j}\bar{u_{j}}\}_{j\in\mathbf{N}}.$$
	Then $\mathcal{H}$ is a Hilbert $\mathcal{A}$-module.
	
	Define $f_{j}=\{f_{i}^{j}\}_{i\in\mathbf{N}^{\ast}}$ by $f_{i}^{j}=\frac{1}{2}+\frac{1}{i}$ if $i=j$ and $f_{i}^{j}=0$ if $i\neq j$ $\forall j\in\mathbf{N}^{\ast}$.

Now define the adjointable operator $\Lambda_{j}: \mathcal{H}\to\mathcal{A},\;\; \Lambda_{j}x=\langle x,f_{j}\rangle$.

then for every $x\in\mathcal{H}$ we have$$\sum_{j\in\mathbf{N}}\langle\Lambda_{j}x,\Lambda_{j}x\rangle=\{\frac{1}{2}+\frac{1}{i}\}_{i\in\mathbf{N}^{\ast}}\langle x,x\rangle\{\frac{1}{2}+\frac{1}{i}\}_{i\in\mathbf{N}^{\ast}}.$$ 
So $\{\Lambda_{j}\}_{j}$ is a $\{\frac{1}{2}+\frac{1}{i}\}_{i\in\mathbf{N}^{\ast}}$-tight $\ast$-g-frame.

Let $K:\mathcal{H}\to\mathcal{H}$ defined by $Kx=\{\frac{x_{i}}{i}\}_{i\in\mathbf{N}^{\ast}}$.

Then for every $x\in\mathcal{H}$ we have $$\langle K^{\ast}x,K^{\ast}x\rangle_{\mathcal{A}}\leq\sum_{j\in\mathbf{N}}\langle\Lambda_{j}x,\Lambda_{j}x\rangle=\{\frac{1}{2}+\frac{1}{i}\}_{i\in\mathbf{N}^{\ast}}\langle x,x\rangle\{\frac{1}{2}+\frac{1}{i}\}_{i\in\mathbf{N}^{\ast}}.$$

Now, let $(\Omega, \mu)$ be a $\sigma$-finite measure space with infinite measure and $\{H_{\omega}\}_{\omega \in \Omega}$ be a family of Hilbert $A$-module $(H_{\omega}=C_{0}, \textcolor{white}{..}\forall w \in \Omega)$.\\

Since $\Omega$ is a $\sigma$-finite, it can be written as a disjoint union $\Omega = \bigcup \Omega_{\omega}	$ of countably many subsets $\Omega_{\omega} \subseteq \Omega $, such that $\mu  (\Omega_{k}) < \infty, \textcolor{white}{..} \forall k \in \mathbb {N}$. Without less of generality, assume that $\mu  (\Omega_{k}) > 0 \textcolor{white}{..} \forall k \in \mathbb {N}$.\\
 For each $\omega \in \Omega $, define the operator : $\Lambda_{\omega} : H \rightarrow H_{w}$ by : 
\begin{equation*}
\Lambda_{w}(x) = \frac{1}{\mu  (\Omega_{k})}\langle x,f_{k}\rangle h_{\omega}, \textcolor{white}{..} \forall x \in H
\end{equation*}   
where $k$ is such that $w\in \Omega_{\omega}$ and $h_{\omega}$ is an arbitrary element of $H_{\omega}$, such that $\|h_{\omega}\|= 1$.\\
For each $ x\in H$, $\{\Lambda_{\omega}x\}_{\omega \in \Omega}$ is strongly measurable (since $h_{\omega}$ are fixed) and 
\begin{equation*}
\int_{\Omega}\langle \Lambda_{\omega}x,\Lambda_{\omega}x \rangle d\mu (\omega) = \sum_{j\in \mathbb{N}}\langle x,f_{j}\rangle\langle f_{j},x\rangle
\end{equation*} 
So, therefore 
\begin{align*}
\langle K^{\ast}x,K^{\ast}x\rangle \leq \int_{\Omega}\langle \Lambda_{\omega}x,\Lambda_{\omega}x \rangle d\mu (\omega)&=\sum_{j\in \mathbb{N}}\langle x,f_{j}\rangle\langle f_{j},x\rangle\\
&=\{\frac{1}{2} + \frac{1}{i}\}_{i\in \mathbb {N}^{\ast}}\langle x,x\rangle\{\frac{1}{2} + \frac{1}{i}\}_{i\in \mathbb {N}^{\ast}}
\end{align*}
So $\{\Lambda_{\omega}\}_{\omega \in \Omega}$ is a continuous $\ast$-K-g-frame.
\end{example}
\begin{remark}\textcolor{white}{.}
	\begin{itemize}
	
	\item Every continuous $\ast$-g-frame is a continuous $\ast$-K-g-frame

indeed:
	
	Let $\{\Lambda_{w}\in End_{\mathcal{A}}^{\ast}(U,V_{w}): w\in\Omega\}$ a continuous $\ast$-g-frame for Hilbert $C^{\ast}$-module $U$ with respect to $\{V_{w}: w\in\Omega\}$, then 
	\begin{equation*} 
	A\langle x,x\rangle A^{\ast}\leq\int_{\Omega}\langle\Lambda_{w}x,\Lambda_{w}x\rangle d\mu(w)\leq B\langle x,x\rangle B^{\ast}, \forall x\in U.
	\end{equation*}
	
	or
\begin{equation*} 
\langle K^{\ast}x,K^{\ast}x\rangle \leq||K||^{2}\langle x,x\rangle, \forall x\in U.
\end{equation*}
then

	\begin{equation*} 
	(||K||^{-1}A)\langle K^{\ast}x,K^{\ast}x\rangle(||K||^{-1}A)^{\ast}\leq\int_{\Omega}\langle\Lambda_{w}x,\Lambda_{w}x\rangle d\mu(w)\\
	\leq B\langle x,x\rangle B^{\ast}
	\end{equation*}
	so $\{\Lambda_{w}\in End_{\mathcal{A}}^{\ast}(U,V_{w}): w\in\Omega\}$ be a continuous $\ast$-K-g-frame with lower and upper bounds $||K||^{-1}A$ and $B$, respectively.
\item If $K\in  End_{\mathcal{A}}^{\ast}(H)$ is a surjectif operator, then every continuous $\ast$-K-g-frame for $H$ with respect to $\{V_{w}: w\in\Omega\}$ is a continuous $\ast$-g-frame.

indeed:

 If $K$ is surjectif there exists $m>0$ such that 
\begin{equation*}
m\langle x,x\rangle \leq \langle K^{\ast}x,K^{\ast}x\rangle
\end{equation*}
then
\begin{equation*}
(A\sqrt{m})\langle x,x\rangle (A\sqrt{m})^{\ast} \leq A\langle K^{\ast}x,K^{\ast}x\rangle A^{\ast}
\end{equation*}
or $\{\Lambda_{w}\in End_{\mathcal{A}}^{\ast}(U,V_{w}): w\in\Omega\}$ be a continuous $\ast$-K-g-frame, we have
\begin{equation*}
(A\sqrt{m})\langle x,x\rangle (A\sqrt{m})^{\ast} \leq\int_{\Omega}\langle\Lambda_{w}x,\Lambda_{w}x\rangle d\mu(w)\leq B\langle x,x\rangle B^{\ast}
\end{equation*}
hence  $\{\Lambda_{w}\in End_{\mathcal{A}}^{\ast}(U,V_{w}): w\in\Omega\}$ be a continuous $\ast$-g-frame for $U$ with lower and upper bounds $A\sqrt{m}$ and $B$, respectively
\end{itemize}
\end{remark}
Let $K\in  End_{\mathcal{A}}^{\ast}(U)$,
and $\{\Lambda_{w}\in End_{\mathcal{A}}^{\ast}(U,V_{w}): w\in\Omega\}$ a continuous $\ast$-K-g-frame for Hilbert $C^{\ast}$-module $U$ with respect to $\{V_{w}: w\in\Omega\}$.

We define an operator $T:U\rightarrow\oplus_{w\in\Omega}V_{w}$ by:\\ $Tx=\{\Lambda_{w}x: w\in\Omega\} \forall x\in U$,\\
then $T$ is called the continuous $\ast$-K-g-frame transform.

So its adjoint operator is: $T^{\ast}:\oplus_{w\in\Omega}V_{w}\rightarrow U$ given by:\\ $T^{\ast}(\{x_{\omega}\}_{\omega \in \Omega})=\int_{\Omega}\Lambda^{\ast}_{\omega}x_{\omega} d\mu(w)$\\
By composing $T$ and $T^{\ast}$, the frame operator $S=T^{\ast}T$ given by:\\ $Sx=\int_{\Omega}\Lambda^{\ast}_{\omega}\Lambda_{\omega}x d\mu(w)$,
S is called continuous $\ast$-K-g frame operator
\begin{theorem}\textcolor{white}{.}
	
	The continuous $\ast$-K-g frame operator $S$ is a bounded, positive, selfadjoint and $||A^{-1}||^{-2}\|K\|^{2}\leq ||S||\leq ||B||^{2}$
\end{theorem}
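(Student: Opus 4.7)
The plan is to exploit the factorisation $S=T^{\ast}T$, which is built into the definition of the frame transform, and then extract the two norm estimates directly from the frame inequality \eqref{2.1}.

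First I would check that $T\colon U\to\oplus_{w\in\Omega}V_{w}$ is a well-defined bounded adjointable operator. Well-definedness is immediate from the upper inequality in \eqref{2.1}: for any $x\in U$,
\[
\|Tx\|^{2}=\left\|\int_{\Omega}\langle\Lambda_{w}x,\Lambda_{w}x\rangle d\mu(w)\right\|\leq\|B\langle x,x\rangle B^{\ast}\|\leq\|B\|^{2}\|x\|^{2},
\]
so $Tx$ lies in $\oplus_{w\in\Omega}V_{w}$ and $\|T\|\leq\|B\|$. The formula given for $T^{\ast}$ in the excerpt verifies adjointability. Consequently $S=T^{\ast}T$ is bounded, positive, and selfadjoint with $\|S\|=\|T\|^{2}\leq\|B\|^{2}$, which disposes of three of the four assertions together with the upper estimate.

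For the lower estimate, the key idea is to sandwich the positive element $\langle K^{\ast}x,K^{\ast}x\rangle$ between $A^{-1}$ and $(A^{\ast})^{-1}$. For any positive $a\in\mathcal{A}$ one has $a=A^{-1}(AaA^{\ast})(A^{\ast})^{-1}$, hence $\|a\|\leq\|A^{-1}\|^{2}\|AaA^{\ast}\|$; equivalently $\|AaA^{\ast}\|\geq\|A^{-1}\|^{-2}\|a\|$. Applying this with $a=\langle K^{\ast}x,K^{\ast}x\rangle$ and combining with the lower inequality in \eqref{2.1} (both sides are positive, so the norm inequality is preserved), I obtain
\[
\|\langle Sx,x\rangle\|\geq\|A\langle K^{\ast}x,K^{\ast}x\rangle A^{\ast}\|\geq\|A^{-1}\|^{-2}\|K^{\ast}x\|^{2}.
\]
Cauchy--Schwarz in the Hilbert $\mathcal{A}$-module gives $\|\langle Sx,x\rangle\|\leq\|S\|\|x\|^{2}$, so taking the supremum over $\|x\|=1$ yields $\|S\|\geq\|A^{-1}\|^{-2}\|K^{\ast}\|^{2}=\|A^{-1}\|^{-2}\|K\|^{2}$.

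The two potential obstacles are cosmetic but worth handling cleanly. The first is the use of $A^{-1}$: the theorem presumes $A$ to be invertible (implicit in the appearance of $\|A^{-1}\|^{-2}$), and I would simply note this once. The second is the transfer of the operator inequality $A\langle K^{\ast}x,K^{\ast}x\rangle A^{\ast}\leq\langle Sx,x\rangle$ to a norm inequality, which is valid because both sides are positive elements of $\mathcal{A}$ and the $C^{\ast}$-norm is monotone on the positive cone. Everything else is a direct computation using the definition of $S$ and the identity $\|T^{\ast}T\|=\|T\|^{2}$.
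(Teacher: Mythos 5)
Your proof is correct, and for the lower bound it is essentially the paper's own argument: pass from the operator inequality $A\langle K^{\ast}x,K^{\ast}x\rangle A^{\ast}\leq\langle Sx,x\rangle$ to a norm inequality via $\|a\|\leq\|A^{-1}\|^{2}\|AaA^{\ast}\|$ and take a supremum over the unit ball. Where you genuinely differ is in how the qualitative properties and the upper bound are obtained. The paper verifies selfadjointness by a direct computation moving $\Lambda_{w}^{\ast}\Lambda_{w}$ across the inner product under the integral sign, declares positivity to be ``clear,'' and then extracts \emph{both} norm bounds from the identification $\sup_{\|x\|\leq 1}\|\langle Sx,x\rangle\|=\|S\|$ --- a fact about positive adjointable operators on Hilbert $C^{\ast}$-modules that it uses without comment. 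You instead read everything off the factorisation $S=T^{\ast}T$: once $\|T\|\leq\|B\|$ is established from the upper frame inequality, boundedness, positivity, selfadjointness and $\|S\|=\|T\|^{2}\leq\|B\|^{2}$ all follow from the $C^{\ast}$-identity, and for the lower bound you only need the elementary direction $\|\langle Sx,x\rangle\|\leq\|S\|\,\|x\|^{2}$ (Cauchy--Schwarz) rather than the equality of that supremum with the norm. This is slightly more economical and avoids the one nontrivial step the paper leaves implicit; it does, however, lean on the adjointability of $T$ (i.e.\ that the formula for $T^{\ast}$ really defines an adjoint), which both you and the paper take for granted from the definition of the frame transform. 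Your explicit remark that $A$ must be invertible for $\|A^{-1}\|$ to make sense addresses a gap the paper also glosses over, since the definition only requires $A$ to be ``strictly nonzero.''
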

\begin{proof}
	First we show, $S$ is a selfadjoint operator. By definition we have $\forall x, y\in U$
	\begin{align*}
	\langle Sx,y\rangle&=\left\langle\int_{\Omega}\Lambda_{w}^{\ast}\Lambda_{w}xd\mu(w),y\right\rangle\\
	&=\int_{\Omega}\langle\Lambda_{w}^{\ast}\Lambda_{w}x,y\rangle d\mu(w)\\
	&=\int_{\Omega}\langle x,\Lambda_{w}^{\ast}\Lambda_{w}y\rangle d\mu(w)\\
	&=\left\langle x,\int_{\Omega}\Lambda_{w}^{\ast}\Lambda_{w}yd\mu(w)\right\rangle\\
	&=\langle x,Sy\rangle.
	\end{align*}
	Then $S$ is a selfadjoint.
	
	Clearly $S$ is positive.
	
	By definition of a continuous $\ast$-K-g-frame we have
	\begin{equation*}
	A\langle K^{\ast}x,K^{\ast}x\rangle A^{\ast}\leq\int_{\Omega}\langle\Lambda_{w}x,\Lambda_{w}x\rangle d\mu(w)\leq B\langle x,x\rangle B^{\ast}.
	\end{equation*}
	So
	\begin{equation*}
	A\langle K^{\ast}x,K^{\ast}x\rangle A^{\ast}\leq\langle Sx,x\rangle\leq B\langle x,x\rangle B^{\ast}.
	\end{equation*}
	This give 
	\begin{equation*}
	\|A^{-1}\|^{-2}\|\langle KK^{\ast}x,x\rangle\|\leq\|\langle Sx,x\rangle\|\leq \|B\|^{2}\|\langle x,x\rangle\|.
	\end{equation*}
	
	If we take supremum on all $x\in U$, where $\|x\|\leq1$, we have  $$\|A^{-1}\|^{-2}\|K\|^{2}\leq\|S\|\leq\|B\|^{2}.$$
\end{proof}

\begin{theorem}\textcolor{white}{.}
	
	Let $K\in  End_{\mathcal{A}}^{\ast}(H)$  be surjective and $\{\Lambda_{w}\in End_{\mathcal{A}}^{\ast}(U,V_{w}): w\in\Omega\}$ be a continuous $\ast$-K-g-frame for $U$, with lower and upper bounds $A$ and $B$, respectively and with the continuous $\ast$-K-g-frame operator $S$.\\ Let $T\in End_{\mathcal{A}}^{\ast}(U)$ be invertible, then $\{\Lambda_{w}T: w\in\Omega\}$ is a continuous $\ast$-K-g-frame for $U$ with continuous $\ast$-K-g-frame operator $T^{\ast}ST$.
\end{theorem}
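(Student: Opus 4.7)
The plan is to verify, in order, measurability of the new family, the explicit form of the frame operator, and the two frame inequalities for $\{\Lambda_w T\}_{w\in\Omega}$.

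First I would note that each $\Lambda_w T$ is a composition of adjointable operators and therefore lies in $End_{\mathcal{A}}^{\ast}(U,V_w)$, and that $w\mapsto \Lambda_w(Tx)$ is measurable by applying the measurability hypothesis to the single element $Tx\in U$. Next, the frame operator identity follows from pulling $T^{\ast}$ outside the Bochner integral: for every $x\in U$,
\begin{equation*}
\int_\Omega (\Lambda_w T)^{\ast}(\Lambda_w T)x\, d\mu(w)=T^{\ast}\!\int_\Omega \Lambda_w^{\ast}\Lambda_w(Tx)\, d\mu(w)=T^{\ast}STx.
\end{equation*}

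For the upper bound I would substitute $Tx$ into the Bessel inequality for $\{\Lambda_w\}$ and apply Lemma~\ref{1} to dominate $\langle Tx,Tx\rangle$ by $\|T\|^2\langle x,x\rangle$, producing an upper bound element of the form $\|T\|B\in\mathcal{A}$.

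The delicate step is the lower bound. Since $K$ is surjective, the preceding Remark shows that $\{\Lambda_w\}$ is in fact a continuous $\ast$-g-frame with lower bound of the form $\sqrt{m}\,A$, where $m>0$ is furnished by Lemma~\ref{sb}. Substituting $Tx$ in this g-frame inequality produces a term $\langle Tx,Tx\rangle$ inside the lower estimate. Because $T$ is invertible, applying Lemma~\ref{1} to $T^{-1}$ evaluated at $Tx$ yields $\langle x,x\rangle\le\|T^{-1}\|^{2}\langle Tx,Tx\rangle$, i.e.\ $\langle Tx,Tx\rangle\ge\|T^{-1}\|^{-2}\langle x,x\rangle$. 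Chaining these estimates gives a continuous $\ast$-g-frame inequality for $\{\Lambda_w T\}$, and the first half of the same Remark re-interprets this as a continuous $\ast$-K-g-frame inequality, with the frame operator already identified as $T^{\ast}ST$.

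The main obstacle is coordinating the K-g-frame / g-frame conversion so that the surjectivity of $K$ and the invertibility of $T$ are each invoked at the right stage; once the sequencing is fixed, everything reduces to direct applications of Lemma~\ref{1} and Lemma~\ref{sb}.
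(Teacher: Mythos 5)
Your proposal is correct and follows essentially the same route as the paper: substitute $Tx$ into the frame inequality, use the surjectivity of $K$ (Lemma~\ref{sb}) for the lower bound, bound $\langle Tx,Tx\rangle$ above and below via the invertibility of $T$, and pull $T^{\ast}$ through the Bochner integral to identify the frame operator as $T^{\ast}ST$. The only cosmetic differences are that you obtain $\langle Tx,Tx\rangle\ge\|T^{-1}\|^{-2}\langle x,x\rangle$ by applying Lemma~\ref{1} to $T^{-1}$, whereas the paper routes through Lemma~\ref{3} and the estimate $\|T^{-1}\|^{-2}\le\|(T^{\ast}T)^{-1}\|^{-1}$ to reach the same constant, and that you package the lower-bound argument through the Remark's $\ast$-g-frame/$\ast$-K-g-frame conversion while the paper inlines the same inequalities directly.
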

\begin{proof}\textcolor{white}{.}
	
	We have 
	\begin{equation}\label{eq11}
	A\langle K^{\ast}Tx,K^{\ast}Tx\rangle A^{\ast}\leq\int_{\Omega}\langle\Lambda_{w}Tx,\Lambda_{w}Tx\rangle d\mu(w)\leq B\langle Tx,Tx\rangle B^{\ast}, \forall x\in U.
	\end{equation}
	Using Lemma (1.3) , we have $\|(T^{\ast}T)^{-1}\|^{-1}\langle x,x\rangle\leq\langle Tx,Tx\rangle$, $\forall x\in U$.\\ 
	 $K$ is surjectif, then there exist $m$ such that: 
	\begin{equation*}
		m\langle Tx,Tx\rangle \leq \langle K^{\ast}Tx,K^{\ast}Tx\rangle
			\end{equation*}
			then
				\begin{equation*}
			 m\|(T^{\ast}T)^{-1}\|^{-1}\langle x,x\rangle\leq \langle K^{\ast}Tx,K^{\ast}Tx\rangle
				\end{equation*}		
			
	so
			\begin{equation*}
			m\|(T^{\ast}T)^{-1}\|^{-1}A\langle x,x\rangle A^{\ast}\leq A\langle K^{\ast}Tx,K^{\ast}Tx\rangle A^{\ast}
			\end{equation*}

	Or $\|T^{-1}\|^{-2}\leq\|(T^{\ast}T)^{-1}\|^{-1}$, this implies:
	
	\begin{equation}\label{eq22} 
	(\|T^{-1}\|^{-1}\sqrt{m}A)\langle x,x\rangle(\|T^{-1}\|^{-1}\sqrt{m}A)^{\ast}\leq A\langle K^{\ast}Tx,K^{\ast}Tx\rangle A^{\ast}, \forall x\in U.
	\end{equation}
	And we know that $\langle Tx,Tx\rangle\leq\|T\|^{2}\langle x,x\rangle$, $\forall x\in U$. This implies that:
	\begin{equation}\label{eq33}
	B\langle Tx,Tx\rangle B^{\ast}\leq(\|T\|B)\langle x,x\rangle(\|T\|B)^{\ast}, \forall x\in U.
	\end{equation}
	Using \eqref{eq11}, \eqref{eq22}, \eqref{eq33} we have:
	\begin{equation*}
	(\|T^{-1}\|^{-1}\sqrt{m}A)\langle x,x\rangle(\|T^{-1}\|^{-1}\sqrt{m}A)^{\ast}\leq \int_{\Omega}\langle\Lambda_{w}Tx,\Lambda_{w}Tx\rangle d\mu(w)\leq (\|T\|B)\langle x,x\rangle(\|T\|B)^{\ast}
	\end{equation*}

	So $\{\Lambda_{w}T: w\in\Omega\}$ is a continuous $\ast$-K-g-frame for $U$.
	
	Moreover for every $x\in U$, we have
	\begin{align*}
	T^{\ast}STx&=T^{\ast}\int_{\Omega}\Lambda_{w}^{\ast}\Lambda_{w}Txd\mu(w)\\&=\int_{\Omega}T^{\ast}\Lambda_{w}^{\ast}\Lambda_{w}Txd\mu(w)\\&=\int_{\Omega}(\Lambda_{w}T)^{\ast}(\Lambda_{w}T)xd\mu(w).
	\end{align*}
	This completes the proof.	
\end{proof}
\begin{corollary}\textcolor{white}{.}
	
Let $\{\Lambda_{w}\in End_{\mathcal{A}}^{\ast}(U,V_{w}): w\in\Omega\}$ be a continuous $\ast$-K-g-frame for $U$ and $K\in  End_{\mathcal{A}}^{\ast}(U)$ be surjective, with continuous $\ast$-K-g-frame operator $S$. Then $\{\Lambda_{w}S^{-1}: w\in\Omega\}$ is a continuous $\ast$-K-g-frame for $U$.
\end{corollary}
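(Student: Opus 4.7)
My plan is to reduce the corollary to the previous theorem by taking $T=S^{-1}$, so the key step is to argue that $S$ is invertible in $End_{\mathcal{A}}^{\ast}(U)$ and that $S^{-1}$ is itself invertible and adjointable (hence eligible to play the role of $T$).

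First I would use the surjectivity of $K$ together with Lemma \ref{sb} to obtain a constant $m>0$ such that $m\langle x,x\rangle\leq \langle K^{\ast}x,K^{\ast}x\rangle$ for all $x\in U$. Combining this with the lower continuous $\ast$-K-g-frame inequality and with $\langle Sx,x\rangle=\int_{\Omega}\langle\Lambda_{w}x,\Lambda_{w}x\rangle d\mu(w)$ (which also appeared in the proof of the previous theorem), this yields
\begin{equation*}
(\sqrt{m}A)\langle x,x\rangle(\sqrt{m}A)^{\ast}\leq \langle Sx,x\rangle\leq B\langle x,x\rangle B^{\ast},\qquad\forall x\in U.
\end{equation*}
In other words, under the surjectivity hypothesis on $K$, the family is in fact a continuous $\ast$-g-frame (as already recorded in the second item of the remark), and $S$ therefore satisfies a genuine two-sided coercivity estimate.

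From this coercivity I would deduce that $S$ is bounded below: since $\sqrt{m}A$ is strictly nonzero in $\mathcal{A}$, the inequality above gives a positive constant $c$ with $c\|x\|^{2}\leq\|\langle Sx,x\rangle\|\leq\|S\|\|x\|\|Sx\|$, hence $\|Sx\|\geq c'\|x\|$. Combined with the fact that $S$ is self-adjoint and positive (established in the previous theorem), standard Hilbert $C^{\ast}$-module arguments (or a direct application of Lemma \ref{sb} to the self-adjoint operator $S$) show that $S$ is surjective and injective, and hence invertible in $End_{\mathcal{A}}^{\ast}(U)$, with $S^{-1}\in End_{\mathcal{A}}^{\ast}(U)$.

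Once $S^{-1}$ is known to be an adjointable invertible operator on $U$, the previous theorem applied with $T=S^{-1}$ immediately yields that $\{\Lambda_{w}S^{-1}:w\in\Omega\}$ is a continuous $\ast$-K-g-frame for $U$, completing the proof. The only genuinely nontrivial step is the invertibility of $S$; the rest is bookkeeping by invoking the previous theorem. I expect the main subtlety to be making the passage from the $\mathcal{A}$-valued lower bound $(\sqrt{m}A)\langle x,x\rangle(\sqrt{m}A)^{\ast}\leq\langle Sx,x\rangle$ to the scalar lower bound $\|Sx\|\geq c'\|x\|$ rigorous, since one must use that $A$ is strictly nonzero in $\mathcal{A}$ to conclude that $\sqrt{m}A$ has a nonzero norm-induced lower bound on $\langle x,x\rangle$.
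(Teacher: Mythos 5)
Your proposal takes exactly the paper's route: the paper's entire proof is the one-line observation that the result follows from the preceding theorem with $T=S^{-1}$. The additional work you do --- deducing from the surjectivity of $K$ and the lower frame bound that $S$ is bounded below, hence invertible and adjointable, so that it is legitimate to substitute $T=S^{-1}$ --- is a detail the paper silently omits, and supplying it (including the point that one needs $A$ strictly nonzero, in effect invertible, to pass from the $\mathcal{A}$-valued inequality to a scalar lower bound) makes your version more complete than the published one.
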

\begin{proof}\textcolor{white}{.}
	
Result from the last theorem by taking $T=S^{-1}$	
\end{proof}

the following lemma caracterize a continuous $\ast$-K-g-frame by its frame operator

\begin{theorem}\textcolor{white}{.}
	
	Let $\{\Lambda_{\omega}\}_{\omega \in \Omega}$ be a continuous $\ast$-g-Bessel for $H$ with respect $\{H_{\omega}\}_{\omega \in \Omega}$.
	
	then $\{\Lambda_{\omega}\}_{\omega \in \Omega}$ is a continuous $\ast$-K-g-frame for $H$ with respect to $\{H_{\omega}\}_{\omega \in \Omega}$ if and only if there exist a constant $A>0$ such that $S\geq AKK^{\ast}$ where $S$ is the frame operator for $\{\Lambda_{\omega}\}_{\omega \in \Omega}$. 
\end{theorem}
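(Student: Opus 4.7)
The plan is to prove the equivalence by reformulating both directions in terms of the frame operator $S$, using the identity $\langle Sx,x\rangle = \int_{\Omega}\langle\Lambda_{\omega}x,\Lambda_{\omega}x\rangle d\mu(\omega)$ (which comes from $S=T^{\ast}T$) and the adjoint identity $\langle K^{\ast}x,K^{\ast}x\rangle = \langle KK^{\ast}x,x\rangle$. Under this translation, the lower $\ast$-K-g-frame inequality and the operator inequality $S \geq AKK^{\ast}$ are both statements about $\langle Sx,x\rangle$ dominating a positive multiple of $\langle KK^{\ast}x,x\rangle$, the only difference being whether the multiplier is a scalar or an element of $\mathcal{A}$ applied as a sandwich.

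I would start with the easier $(\Leftarrow)$ direction. Assuming $S \geq AKK^{\ast}$ for some constant $A > 0$ together with the continuous $\ast$-g-Bessel upper bound $B \in \mathcal{A}$, one interprets $\sqrt{A}$ as the strictly nonzero element $\sqrt{A}\cdot 1_{\mathcal{A}} \in \mathcal{A}$ and immediately reads off
\begin{equation*}
(\sqrt{A}\cdot 1_{\mathcal{A}})\langle K^{\ast}x,K^{\ast}x\rangle(\sqrt{A}\cdot 1_{\mathcal{A}})^{\ast} \leq \langle Sx,x\rangle = \int_{\Omega}\langle\Lambda_{\omega}x,\Lambda_{\omega}x\rangle d\mu(\omega) \leq B\langle x,x\rangle B^{\ast},
\end{equation*}
which is exactly the continuous $\ast$-K-g-frame inequality with frame bounds $\sqrt{A}\cdot 1_{\mathcal{A}}$ and $B$.

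For the $(\Rightarrow)$ direction, the lower frame inequality reads $A_{0}\langle K^{\ast}x,K^{\ast}x\rangle A_{0}^{\ast} \leq \langle Sx,x\rangle$ for all $x \in U$, with $A_{0} \in \mathcal{A}$ strictly nonzero. My plan is to exploit the $\mathcal{A}$-linearity of $K$ to rewrite the left-hand side as $\langle KK^{\ast}(A_{0}x),A_{0}x\rangle$, then substitute $x \mapsto A_{0}^{-1}y$ (possible since the strict non-vanishing of $A_{0}$ supplies invertibility) to obtain $\langle KK^{\ast}y,y\rangle \leq \langle(A_{0}^{-1})^{\ast}SA_{0}^{-1}y,y\rangle$. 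Combining this with the scalar estimate $A_{0}^{\ast}A_{0} \geq \|A_{0}^{-1}\|^{-2}\cdot 1_{\mathcal{A}}$ and absorbing the $A_{0}$-factors into $K$ and $K^{\ast}$ via their module-linearity should then produce the operator inequality $S \geq \|A_{0}^{-1}\|^{-2}\,KK^{\ast}$, giving $S \geq AKK^{\ast}$ with $A = \|A_{0}^{-1}\|^{-2} > 0$.

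The main obstacle I expect is precisely this last step of the forward direction: in a noncommutative $C^{\ast}$-algebra, passing from the sandwich inequality $A_{0}bA_{0}^{\ast} \leq c$ (with $b = \langle KK^{\ast}x,x\rangle \geq 0$) to a scalar-multiple estimate $\alpha b \leq c$ is delicate when $A_{0}$ and $b$ fail to commute. The argument must therefore hinge on the interplay between the $\mathcal{A}$-module action on $U$ and the $\mathcal{A}$-linearity of $K$, $K^{\ast}$, so that $A_{0}$ can first be absorbed into the operator expression before the $C^{\ast}$-algebra lower bound on $A_{0}^{\ast}A_{0}$ is invoked to produce a genuine operator inequality on $U$.
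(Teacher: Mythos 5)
Your backward direction is correct and is essentially the only part of the argument that the paper actually carries out: the paper's proof is nothing more than the chain of reformulations
$A\langle K^{\ast}x,K^{\ast}x\rangle A^{\ast}\leq\int_{\Omega}\langle\Lambda_{w}x,\Lambda_{w}x\rangle d\mu(w)$
$\Longleftrightarrow A\langle KK^{\ast}x,x\rangle A^{\ast}\leq\langle Sx,x\rangle$, after which it declares the conclusion to hold. It never converts between the $\mathcal{A}$-valued sandwich bound of the definition and the scalar operator inequality $S\geq AKK^{\ast}$ of the statement. So you have correctly identified where the real content of the theorem lies; unfortunately, your plan for the forward direction does not close that gap.

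Concretely, the step that fails is the passage from $A_{0}\,b\,A_{0}^{\ast}\leq c$ (with $b=\langle K^{\ast}x,K^{\ast}x\rangle\geq 0$, $c=\langle Sx,x\rangle$) to $\|A_{0}^{-1}\|^{-2}\,b\leq c$. This would require an inequality of the form $a\,b\,a^{\ast}\geq\|a^{-1}\|^{-2}\,b$, equivalently $a^{-1}d\,(a^{-1})^{\ast}\leq\|a^{-1}\|^{2}d$ for positive $d$, and this is false in a noncommutative $C^{\ast}$-algebra: take $\mathcal{A}=M_{2}(\mathbb{C})$, $d$ a rank-one projection $p$ and $a$ a unitary with $apa^{\ast}=q\not\leq p$; then $\|a^{-1}\|^{2}d=p$ does not dominate $q$. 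The scalar estimate $A_{0}^{\ast}A_{0}\geq\|A_{0}^{-1}\|^{-2}1_{\mathcal{A}}$ only controls $A_{0}^{\ast}A_{0}$ itself, not the conjugation $b\mapsto A_{0}bA_{0}^{\ast}$ against an arbitrary positive $b$ that need not commute with $A_{0}$. Absorbing $A_{0}$ into $K^{\ast}$ via module-linearity (so that $A_{0}\langle K^{\ast}x,K^{\ast}x\rangle A_{0}^{\ast}=\langle KK^{\ast}(A_{0}x),A_{0}x\rangle$) and substituting $x=A_{0}^{-1}y$ merely returns you to an equivalent sandwich inequality $\langle KK^{\ast}y,y\rangle\leq A_{0}^{-1}\langle Sy,y\rangle(A_{0}^{-1})^{\ast}$; taking norms from there yields only a norm inequality, not the $\mathcal{A}$-valued inequality $\langle(S-\alpha KK^{\ast})y,y\rangle\geq 0$ needed for $S\geq\alpha KK^{\ast}$. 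Unless one imposes extra hypotheses (e.g.\ $\mathcal{A}$ commutative, or $A_{0}$ a positive scalar multiple of $1_{\mathcal{A}}$), the forward implication does not follow by these manipulations — a defect your proposal shares with, and indeed makes more visible than, the paper's own proof.
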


\begin{proof}\textcolor{white}{.}
	
	We know $\{\Lambda_{\omega}\}_{\omega \in \Omega}$ is a continuous $\ast$-K-g-frame for $H$ with bounded $A$ et $B$ if and only if 
	\begin{equation*}
	A\langle K^{\ast}x,K^{\ast}x\rangle A^{\ast}\leq\int_{\Omega}\langle\Lambda_{w}x,\Lambda_{w}x\rangle d\mu(w)\leq B\langle x,x\rangle B^{\ast}
	\end{equation*}
	If and only if 
	\begin{equation*}
	A\langle KK^{\ast}x,x\rangle A^{\ast}\leq\int_{\Omega}\langle\Lambda^{\ast}_{w}\Lambda_{w}x,x\rangle d\mu(w)\leq B\langle x,x\rangle B^{\ast}
	\end{equation*}
	If and only if 
	\begin{equation*}
	A\langle KK^{\ast}x,x\rangle A^{\ast}\leq\langle Sx,x\rangle \leq B\langle x,x\rangle B^{\ast}
	\end{equation*}
	Where $S$ is the continuous $\ast$-K-g frame operator for  $\{\Lambda_{\omega}\}_{\omega \in \Omega}$.
	
	Therefore, the conclusuin holds.
\end{proof}

\bibliographystyle{amsplain}

\vspace{0.1in}

\end{document}